\newtheorem{theorem}{Theorem}
\newtheorem{proposition}[theorem]{Proposition}
\newtheorem{lemma}[theorem]{Lemma}
\newtheorem*{theoremX}{Theorem}
\newtheorem*{stc}{Strong Terminating Conjecture \cite{ShultzShiflett}}
\newtheorem*{wtc}{Weak Terminating Conjecture \cite{ChamberlandMartelli}}
\newtheorem*{cc}{Continuity Conjecture \cite{ChamberlandMartelli}}
\newtheorem*{uc}{Unboundedness Conjecture \cite{HoseanaMSc}}
\newcommand{\mmm}{{\sc mmm}}
\newcommand{\cS}{\mathcal{S}}
\newcommand{\cM}{\mathcal{M}}
\newcommand{\tauA}{\tau_{\textnormal{A}}}
\newcommand{\mA}{m_{\textnormal{A}}}
\newcommand{\bMA}{\mathbf{M}_{\textnormal{A}}}
\newcommand{\rMA}{\textnormal{M}_{\textnormal{A}}}
\pgfplotsset{ every non boxed x axis/.append style={x axis line style=<->},
     every non boxed y axis/.append style={y axis line style=<->}, every axis/.append style={font=\tiny}}
\definecolor{newpurple}{RGB}{195, 22, 140}
\definecolor{newgray}{RGB}{240, 240, 240}
\definecolor{newlightblue}{RGB}{0, 175, 158}
\definecolor{newblue}{RGB}{47, 50, 145}
\definecolor{newyellow}{RGB}{232, 222, 0}
\definecolor{newgreen}{RGB}{0, 155, 1}
\DeclareMathOperator{\median}{median}
\begin{document}
\title{The Akiyama mean-median map has unbounded transit time and discontinuous limit}
\author{\small Jonathan Hoseana}
\address{\normalfont\small Department of Mathematics, Parahyangan Catholic University, Bandung 40141, Indonesia}
\email{j.hoseana@unpar.ac.id}
\date{}

\begin{abstract}
Open conjectures state that, for every $x\in[0,1]$, the orbit $\left(x_n\right)_{n=1}^\infty$ of the mean-median recursion
$$x_{n+1}=(n+1)\cdot\median\left(x_1,\ldots,x_{n}\right)-\left(x_1+\cdots+x_n\right),\quad n\geqslant 3,$$
with initial data $\left(x_1,x_2,x_3\right)=(0,x,1)$, is eventually constant, and that its transit time and limit functions (of $x$) are unbounded and continuous, respectively. In this paper we prove that, for the slightly modified recursion
$$x_{n+1}=n\cdot\median\left(x_1,\ldots,x_{n}\right)-\left(x_1+\cdots+x_n\right),\quad n\geqslant 3,$$
first suggested by Akiyama, the transit time function is unbounded but the limit function is discontinuous.
\end{abstract}

\maketitle

\section{Introduction}\label{section:Introduction}

\sloppy The \textit{mean-median map} (\mmm) enlarges a finite non-empty real set\footnote{The sets on which the \mmm\ acts allow repetitions of elements (i.e., they are \textit{multisets}).} $\left[x_1,\ldots,x_n\right]$ to $\left[x_1,\ldots,x_n,x_{n+1}\right]$, where $x_{n+1}$ is the unique real number which equates the (\textit{arithmetic}) \textit{mean} of the latter set and the \textit{median}\footnote{The middle number after sorting if $n$ is odd, the mean of the middle pair otherwise.} of the former set, namely, 
\begin{equation}\label{eq:originalMMM}
x_{n+1}=(n+1)\cM_n-\cS_n,
\end{equation}
where $\cM_n$ and $\cS_n$ denote the median and the sum of the elements of $\left[x_1,\ldots,x_n\right]$, respectively. Given an initial set $\left[x_1,\ldots,x_{n_0}\right]$, $n_0\in\mathbb{N}$, iterating the map generates an \textit{orbit} $\left(x_n\right)_{n=1}^\infty$ which is conjectured to \textit{stabilise}, i.e., to be eventually constant:\bigskip

\begin{stc}
The \mmm\ orbit of every initial set stabilises.
\end{stc}\bigskip

It is known that the \textit{median sequence} $\left(\cM_n\right)_{n=n_0}^\infty$ associated to the orbit is monotonic \cite[Theorem 2.1]{ChamberlandMartelli}, and converges once a repeated orbit point appears above (below) a median in the non-decreasing (non-increasing) case \cite[Theorem 2.4]{ChamberlandMartelli}. Such repeated points are observed to be ubiquitous \cite[paragraph preceding Section 3]{ChamberlandMartelli}, suggesting:\bigskip

\begin{wtc}
The median sequence of every initial set converges.
\end{wtc}\bigskip


\begin{figure}[t!]
\centering
\input{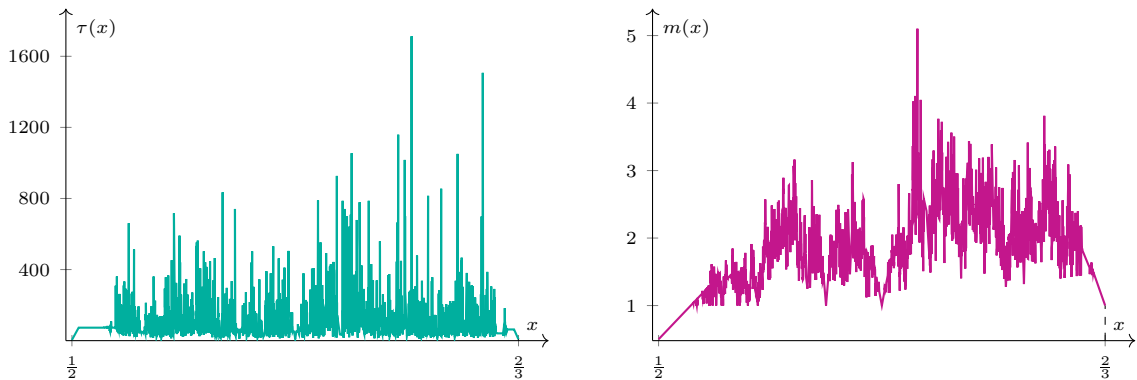}
\caption{\label{fig:mtau}\small
Graphs of $\tau$ (left) and $m$ (right).
}
\end{figure}

Despite intensive research effort \cite{ShultzShiflett,ChamberlandMartelli,HoseanaMSc,CellarosiMunday,HoseanaVivaldi1,HoseanaPhD,HoseanaVivaldi2,Vivaldi}, these terminating conjectures, as well as two additional conjectures to follow, are still open even in the case of smallest non-trivial initial sets: those of size three. The fact that the \mmm\ commutes with elementwise affine transformations \cite[Section 3]{ChamberlandMartelli} makes the orbit of every such set affine-equivalent to that of a univariate initial set $[0,x,1]$, for some real number $x\in\left[\frac{1}{2},\frac{2}{3}\right]$ which we call the \textit{initial condition}. We associate to this set the \textit{transit time} $\tau(x)\in\mathbb{N}_{>3}\cup\{\infty\}$ of its \mmm\ orbit ---the time step at which the orbit stabilises--- and the \textit{limit} $m(x)\in\mathbb{R}$ of its median sequence. These functions, sketched in Figure \ref{fig:mtau}, are conjectured to possess the following properties:\bigskip

\begin{uc}
The function $\tau$ is unbounded.
\end{uc}\medskip

\begin{cc}
The function $m$ is continuous.
\end{cc}\bigskip

A sufficient condition for the appearance of a repeated point ---which guarantees convergence of the median sequence--- is available for bounded rational orbits. Such an orbit is forced to repeat if its time-dependent \textit{effective exponent} ---the largest exponent of $2$ in the denominators of existing points--- grows sublogarithmically over time \cite[equation (2.2)]{HoseanaPhD}. From \eqref{eq:originalMMM} it is apparent that, after each iteration, this exponent either stays unchanged or increases by $1$. Thus, for a sublogarithmic growth, the increments must occur sufficiently infrequently. This infrequency of increments, although well supported by computational evidence, seems to originate from an arithmetical phenomenon which is very difficult to elaborate rigorously.

In order to eliminate this difficulty, Akiyama \cite{Akiyama} suggested modifying the recursion \eqref{eq:originalMMM} into
\begin{equation}\label{eq:akiyamaMMM}
x_{n+1}=n\cM_n-\cS_n,
\end{equation}
thereby introducing a new variant of the \mmm, which we call the \textit{Akiyama} \mmm, whose rational orbits have a \textit{constant} effective exponent. Naturally, for the Akiyama \mmm, there are analogous terminating conjectures; these are also open. However, for this map, clearly, every bounded rational orbit stabilises.

As we shall see, the Akiyama \mmm\ has the same smallest non-trivial form of initial sets, namely $[0,x,1]$, whose transit time $\tauA(x)\in\mathbb{N}_{>3}\cup\{\infty\}$ and limit $\mA(x)\in\mathbb{R}$ are defined analogously for $x\in(-\infty,1)$, and are sketched in Figure \ref{fig:mtauAkiyamaBounds}. For these functions, one naturally questions the analogous Unboundedness and Continuity Conjectures. The main purpose of this paper is to prove analytically that the former holds, whereas the latter fails. More precisely, we will prove:\bigskip

\begin{samepage}
\begin{theoremX}
If $x\in(0,1)$, then $$\tauA(x)\geqslant \frac{2}{x}+3\qquad\qquad\text{and}\qquad\qquad\mA(x)\leqslant 2x-1,$$ where equality holds if and only if $x$ is a unit fraction\footnote{A positive fraction with unit numerator.}.
\end{theoremX}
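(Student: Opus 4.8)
The plan is to pin down the orbit's initial segment exactly and read both inequalities off it. Write $\cM_n,\cS_n$ for the median and sum of $[x_1,\dots,x_n]$. Since the recursion reads $x_{n+1}=n\cM_n-\cS_n$, it gives at once $\cS_{n+1}=n\cM_n$ for all $n\ge 3$, hence $x_{n+1}=n\cM_n-(n-1)\cM_{n-1}$ for $n\ge 4$. The latter shows $\cM_{n-1}\ge\cM_n\Rightarrow x_{n+1}\le\cM_n$; combining this with the elementary fact that adjoining to an $n$-element multiset a number not exceeding its median does not raise the median, and with the base case $x_4=3\cM_3-\cS_3=2x-1\le x=\cM_3$, an induction shows that $(\cM_n)_{n\ge 3}$ is non-increasing and $x_{n+1}\le\cM_n$ for every $n\ge 3$. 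A direct computation gives $\cM_4<\cM_3=x$, so $\mA(x):=\lim_n\cM_n$ exists with $\mA(x)<x$. The identity $\cS_{n+1}=n\cM_n$ also yields a stabilisation criterion: writing $c:=\mA(x)$, if $c$ is attained, say $\cM_{n^\star}=c$ with $n^\star$ minimal (necessarily $n^\star\ge 4$), then $\cM_n=c$ for all $n\ge n^\star$, so $x_{n+1}=nc-(n-1)c=c$ for $n\ge n^\star+1$, while $x_{n^\star+1}=n^\star c-(n^\star-1)\cM_{n^\star-1}<c$; hence the orbit stabilises precisely at step $n^\star+2$, i.e.\ $\tauA(x)=n^\star+2$ (if $c$ is never attained the orbit never stabilises, $\tauA(x)=\infty$, and the first bound is vacuous).

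The main work is an induction describing the initial segment for $0<x\le\tfrac12$:
\[x_4=2x-1,\qquad x_n=-(n-4)x\ \ (5\le n\le N_x+1),\qquad \cM_n=\frac{(5-n)x}{2}\ \ (4\le n\le N_x),\]
where $N_x$ is the largest integer $n$ with $\frac{(5-n)x}{2}\ge 2x-1$ if $x$ is a unit fraction and with $\frac{(5-n)x}{2}>2x-1$ otherwise; concretely $N_x=\lfloor 1+2/x\rfloor$, except that $N_x=1+2/x$ when $x$ is a unit fraction and $N_x=2/x$ when $2/x$ is an odd integer. Each inductive step is verified by sorting the configuration $\{0,x,1\}\cup\{2x-1\}\cup\{-x,-2x,\dots,-(n-4)x\}$ (the $n-3$ negative members occupy the lowest positions) and reading off its central entry; this requires a short case analysis on where $2x-1$ falls within the arithmetic progression $-x,-2x,\dots$, including the coincidences that arise when $1/x-2$ or $1/x$ is an integer. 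The guiding fact: the descending progression supplies the median (which sits at position about $n/2$ among the negatives) until, near $n\approx 2/x$, the term $2x-1$ itself arrives at that position; since $2x-1$ exceeds the progression term it displaces, the formula then breaks \emph{from above}, i.e.\ $\cM_{N_x+1}>\frac{(4-N_x)x}{2}$. Unit fractions are distinguished here: when $1/x\in\mathbb{Z}$, $2x-1=-(1/x-2)x$ is a repeated entry, and this exact alignment makes the formula persist two extra steps. For $\tfrac12<x<1$ the first iterates $x_4,\dots,x_7$ are explicit affine functions of $x$, and inspection yields $\cM_7<2x-1$ and that $\tauA(x)$ exceeds $\tfrac2x+3$ --- enough on a range containing no unit fraction.

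To conclude, the induction gives $\cM_{N_x}=\frac{(5-N_x)x}{2}$, equal to $2x-1$ when $x=1/k$ and strictly larger otherwise. If $x=1/k$, then $N_x=2k+1$, and at step $2k+1$ both central positions of the sorted multiset hold the value $2/k-1=\cM_{2k+1}$ (because $2x-1=-(k-2)x$ occurs twice); appending $x_{2k+2}=2/k-2$, which falls below them, keeps the median at $2/k-1$, whereupon $x_{2k+3}=x_{2k+4}=\dots=2/k-1$. Thus $\mA(1/k)=2/k-1$ and $\tauA(1/k)=2k+3$, so both inequalities become equalities. If $x$ is not a unit fraction, then $\cM_{N_x}>2x-1$, so once $\mA(x)\le 2x-1$ is known we have $\cM_{N_x}>c$, hence $n^\star\ge N_x+1$; in the single borderline family $x=2/(2j+1)$ (where $N_x=2/x$) the break-from-above gives $\cM_{N_x+1}>2x-1\ge c$ as well, so $n^\star\ge N_x+2$; since $N_x\ge 2/x$ with equality only in that family, this yields $\tauA(x)=n^\star+2>\tfrac2x+3$. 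Everything therefore reduces to the inequality $\mA(x)\le 2x-1$, which I expect to be the main obstacle: one must track the median through the short, irregular stretch just after step $N_x$ --- where $2x-1$ has entered the median position and every new iterate $x_{n+1}<\cM_n$ enters below it --- and show, by counting the entries that are $\le 2x-1$, that within boundedly many further steps the median drops to at most $2x-1$, strictly so unless $1+2/x\in\mathbb{Z}$ reproduces the exact alignment of the unit-fraction case. The rest is routine bookkeeping.
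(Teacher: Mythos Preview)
Your overall strategy---pin down the arithmetic-progression phase of the orbit and read the two bounds off its end---is exactly the paper's, and your handling of the unit-fraction case is correct. But for non-unit fractions you explicitly leave the key step open: you write that ``everything therefore reduces to the inequality $\mA(x)\le 2x-1$, which I expect to be the main obstacle,'' and offer only a sketch about counting entries below $2x-1$ through a ``short, irregular stretch''. Since your argument for the strict $\tauA$ bound also invokes $\cM_{N_x+1}>2x-1\ge c=\mA(x)$, \emph{both} conclusions for generic $x$ rest on this unproved inequality; as written, the proof is incomplete.

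The paper closes this gap with a direct two-step computation rather than any counting. With $\ell=\lceil 1/x\rceil$ and $x\in(1/\ell,1/(\ell-1))$, after the progression one has $\cM_{2\ell}=\langle x_4,x_{\ell+1}\rangle$; the resulting $x_{2\ell+1}=\ell x_4-(\ell-1)x_{\ell+1}$ lies strictly below $x_4$, so at the odd step $2\ell+1$ the median is the single central element $x_4=2x-1$ \emph{exactly}. One more iterate gives
\[
x_{2\ell+2}=(2\ell+1)x_4-2\ell\cM_{2\ell}=(\ell+1)x_4-\ell x_{\ell+1}=(2x-1)+\ell\bigl[(\ell-1)x-1\bigr]<2x-1,
\]
the last inequality because $x<1/(\ell-1)$. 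Hence $\cM_{2\ell+2}<\cM_{2\ell+1}=2x-1$, which immediately yields $\mA(x)<2x-1$ and $\tauA(x)>2\ell+3>2/x+3$. There is no irregular stretch to track: the median hits $2x-1$ once and drops past it on the very next step. Replacing your sketched counting argument by this explicit computation of $x_{2\ell+1}$ and $x_{2\ell+2}$ completes the proof.
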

\end{samepage}\bigskip

\noindent The first inequality clearly implies the unboundedness of $\tauA$. Since $\mA(0)=0$, the second inequality implies that $\mA$ is discontinuous at $x=0$.

Our proof of this theorem is methodologically similar to that of the bounds for the transit time and limit of the so-called \textit{normal form} of the original \mmm\ \cite[Theorem 6.2]{HoseanaVivaldi1}; it goes by first showing that every orbit begins with a \textit{predictable phase} whose length depends on an arithmetical property of the initial condition. The bounds for $\tauA$ and $\mA$ in the theorem then can be inferred from, respectively, the number of existing points and the location of the median at the end of the phase.

The simultaneous occurrence of the unboundedness of the transit time and the discontinuity of the limit function is unsurprising. Indeed, in the original \mmm\ we have pointed out that these will be two interrelated consequences if a local functional orbit is found to be divergent \cite[Theorems 5.4 and 5.6]{HoseanaVivaldi1}. While such divergence has not been found in the original \mmm, we find it near $x=0$ in the Akiyama \mmm.

Let us now describe the structure of this paper. In the upcoming section we define the Akiyama \mmm\ more formally and discuss its basic properties. There are properties which are the same as those of the original \mmm\ (the proofs of which are thus omitted): the median sequence is monotonic (Proposition \ref{prop:monotonicmedian}), a repeated orbit point guarantees convergence and two equal consecutive medians cause stabilisation (Proposition \ref{prop:prop1}), as well as a different one: the map commutes with scalar multiplications, but not with non-identity translations (Proposition \ref{prop:commute}). In Section \ref{sec:Main result} we present our main result, namely an explicit description of the predictable phase for every initial condition (Lemma \ref{lemma:regphase}) from which the above theorem is then proved to follow. Finally, the graphs in Figure \ref{fig:mtauAkiyamaBounds} suggest the presence of symmetry around $x=\frac{1}{2}$; a brief discussion on this in Section \ref{sec:Symmetries} concludes the paper.


\begin{figure}[t!]
\centering
\input{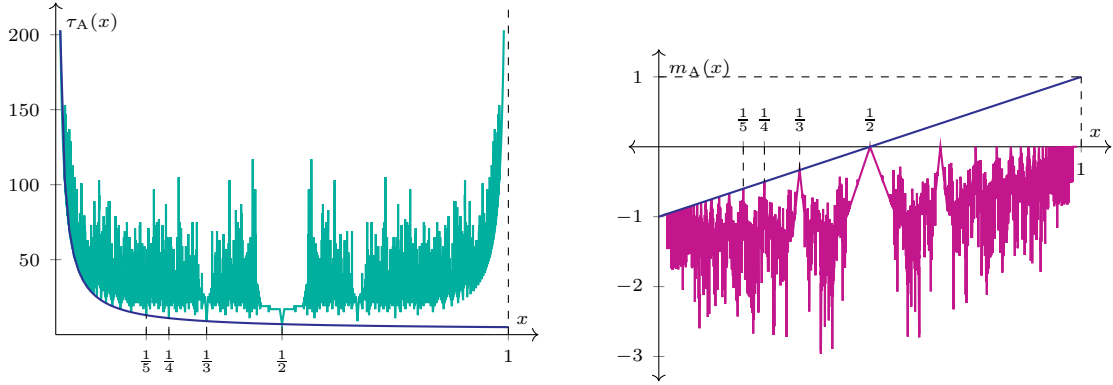}
\caption{\label{fig:mtauAkiyamaBounds}\small
Graphs of $\tauA$ (left) and $\mA$ (right) in $(0,1)$ with bounds given in the main theorem.
}
\end{figure}

\section{Preliminaries}\label{sec:Preliminaries}

The Akiyama \mmm\ is a self-map on the space of finite non-empty real sets. The image $\bMA(\xi)$ of such a set $\xi$ is obtained by increasing the multiplicity of the real number
$$\rMA(\xi):=|\xi|\cM(\xi)-\cS(\xi)$$
in $\xi$ by one, where $|\xi|$, $\cM(\xi)$, and $\cS(\xi)$ denote the cardinality, median, and sum of elements of $\xi$, respectively. Employing the additive union notation \cite[page 50]{Blizard}, we write
$$\bMA(\xi):=\xi\uplus[\rMA(\xi)].$$
Generally, the map $\bMA$ does \textit{not} commute with elementwise affine transformations (cf.~\cite[Theorem 2.2]{ChamberlandMartelli}). However, it commutes with elementwise scalar multiplications:\bigskip

\begin{proposition}\label{prop:commute}\textcolor{white}{a}
For every $a,b\in\mathbb{R}$ with $a\neq0$ we have
$$\bMA(a\xi+b)=(a\xi+b)\uplus[a\rMA(\xi)],$$
and, in particular,
\begin{equation}\label{eq:scalarmultiplication}
\bMA(a\xi)=a\bMA(\xi),
\end{equation}
i.e., $\bMA$ commutes with elementwise scalar multiplications.
\end{proposition}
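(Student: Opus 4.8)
The plan is to reduce both displayed identities to a single computation of $\rMA(a\xi+b)$ in terms of $\rMA(\xi)$, after which everything follows at once from the defining formula $\bMA(\xi)=\xi\uplus[\rMA(\xi)]$.

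First I would record how the three ingredients of $\rMA$ transform under the elementwise map $\xi\mapsto a\xi+b$. The cardinality is unaffected, $|a\xi+b|=|\xi|$. The sum is affine in the obvious way, $\cS(a\xi+b)=a\,\cS(\xi)+b\,|\xi|$, since each of the $|\xi|$ elements picks up an extra summand $b$. The median satisfies $\cM(a\xi+b)=a\,\cM(\xi)+b$ for every $a\neq0$; here one makes a short case distinction on the sign of $a$, since $t\mapsto at+b$ preserves the sorted order of the multiset when $a>0$ and reverses it when $a<0$, but in both cases the central element — or the average of the two central elements — of the transformed multiset equals $a\,\cM(\xi)+b$. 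This sign check is the only step that needs any attention at all.

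Substituting into the definition then gives
\[
\rMA(a\xi+b)=|a\xi+b|\,\cM(a\xi+b)-\cS(a\xi+b)=|\xi|\bigl(a\,\cM(\xi)+b\bigr)-\bigl(a\,\cS(\xi)+b\,|\xi|\bigr)=a\bigl(|\xi|\,\cM(\xi)-\cS(\xi)\bigr)=a\,\rMA(\xi),
\]
the two copies of $b\,|\xi|$ cancelling, so that $b$ leaves no trace. Hence $\bMA(a\xi+b)=(a\xi+b)\uplus[\rMA(a\xi+b)]=(a\xi+b)\uplus[a\,\rMA(\xi)]$, which is the first assertion; taking $b=0$ and using that elementwise scalar multiplication distributes over the additive union, $a\xi\uplus[a\,\rMA(\xi)]=a\bigl(\xi\uplus[\rMA(\xi)]\bigr)$, yields $\bMA(a\xi)=a\,\bMA(\xi)$. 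I would close by observing that the same computation explains why translations are obstructed: the inserted point is $a\,\rMA(\xi)$, not $a\,\rMA(\xi)+b$ — the coefficient $|\xi|$ in $\rMA$, rather than $|\xi|+1$ as in the original \mmm, is exactly what kills the $+b$ — so $\bMA(a\xi+b)\neq a\,\bMA(\xi)+b$ whenever $b\neq0$. No genuine obstacle is expected anywhere in the argument.
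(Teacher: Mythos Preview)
Your proposal is correct and follows essentially the same approach as the paper: both compute $\rMA(a\xi+b)$ directly from the transformation rules for $|\cdot|$, $\cM$, and $\cS$, observe the cancellation of the $b|\xi|$ terms, and then specialise to $b=0$. Your write-up is slightly more explicit in justifying $\cM(a\xi+b)=a\cM(\xi)+b$ via the sign-of-$a$ case split and in spelling out the distributivity step $a\xi\uplus[a\rMA(\xi)]=a(\xi\uplus[\rMA(\xi)])$, but these are refinements of the same argument rather than a different route.
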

\begin{proof}
Since $\cM(a\xi+b)=a\cM(\xi)+b$ and $\cS(a\xi+b)=a\cS(\xi)+|\xi|b$, the map $\bMA$ increases in the set $a\xi+b$ the multiplicity of the number
\begin{eqnarray*}
|a\xi+b|\cM(a\xi+b)-\cS(a\xi+b)&=&|\xi|\left[a\cM(\xi)+b\right]-\left[a\cS(\xi)+|\xi|b\right]\\
                               &=&a\left[|\xi|\cM(\xi)-\cS(\xi)\right]\\
                               &=&a\rMA(\xi),
\end{eqnarray*}
proving the first identity. Setting $b=0$ gives the second identity.
\end{proof}\bigskip

Under iterations of $\bMA$, every initial set $\xi_{n_0}=\left[x_1,\ldots,x_{n_0}\right]$, $n_0\in\mathbb{N}$, is associated to a sequence of sets $\left(\xi_n\right)_{n=n_0}^\infty$, an orbit $\left(x_n\right)_{n=1}^\infty$, and a median sequence $\left(\cM_n\right)_{n=n_0}^\infty$, where
$$\xi_{n+1}=\bMA\left(\xi_{n}\right),\quad x_{n+1}=\rMA\left(\xi_n\right),\quad\text{and}\quad \cM_n:=\cM\left(\xi_n\right),\quad\text{for every }n\geqslant n_0.$$
Moreover, we have
\begin{equation}\label{eq:affcombmedians}
x_{n+2}=(n+1)\cM_{n+1}-n\cM_n,\qquad\text{for every }n\geqslant n_0,
\end{equation}
an expression of an orbit point as an affine combination of the last two medians. Exactly as in the original \mmm\ \cite[Theorem 2.1]{ChamberlandMartelli}, we deduce from \eqref{eq:affcombmedians} that the median sequence is monotonic:\bigskip

\begin{proposition}\label{prop:monotonicmedian}
The median sequence $\left(\cM_n\right)_{n=n_0}^\infty$ is monotonic.
\end{proposition}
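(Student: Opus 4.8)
The plan is to exploit the affine-combination identity \eqref{eq:affcombmedians} together with the elementary fact that appending one element to a finite multiset cannot move the median past the appended element.

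\emph{Step 1: a combinatorial lemma on medians.} I would first record (or cite, since this is the content underlying \cite[Theorem 2.1]{ChamberlandMartelli}) that for every finite non-empty multiset $\xi$ and every $y\in\mathbb{R}$, the median of $\xi\uplus[y]$ lies in the closed interval with endpoints $\cM(\xi)$ and $y$; in particular $\cM(\xi\uplus[y])\geqslant\cM(\xi)$ whenever $y\geqslant\cM(\xi)$, and $\cM(\xi\uplus[y])\leqslant\cM(\xi)$ whenever $y\leqslant\cM(\xi)$. This is proved by sorting the elements of $\xi$ and tracking the position at which $y$ is inserted, split according to the parity of $|\xi|$: if $|\xi|=2j$ the new median is the single central entry $a_j$, $y$, or $a_{j+1}$ of the enlarged list according as $y\leqslant a_j$, $a_j\leqslant y\leqslant a_{j+1}$, or $y\geqslant a_{j+1}$; if $|\xi|=2j+1$ the new median is the average of $a_{j+1}$ with whichever of $y$ or $a_j$ lands in central position. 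In each case the displayed value is checked to lie between $\cM(\xi)$ and $y$.

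\emph{Step 2: locate the new orbit point.} Rewriting \eqref{eq:affcombmedians} gives $x_{n+2}=\cM_{n+1}+n\left(\cM_{n+1}-\cM_n\right)$ for every $n\geqslant n_0$. Since $n\geqslant n_0\geqslant1$, the quantity $x_{n+2}-\cM_{n+1}$ has the same sign as $\cM_{n+1}-\cM_n$; that is, the point $x_{n+2}$ appended at step $n+2$ lies on the same side of the current median $\cM_{n+1}$ as $\cM_{n+1}$ lies relative to $\cM_n$. Because $\xi_{n+2}=\xi_{n+1}\uplus[x_{n+2}]$, applying the lemma of Step 1 with $\xi=\xi_{n+1}$ and $y=x_{n+2}$ yields the one-step implications $\cM_{n+1}>\cM_n\Rightarrow\cM_{n+2}\geqslant\cM_{n+1}$, $\cM_{n+1}<\cM_n\Rightarrow\cM_{n+2}\leqslant\cM_{n+1}$, and $\cM_{n+1}=\cM_n\Rightarrow x_{n+2}=\cM_{n+1}\Rightarrow\cM_{n+2}=\cM_{n+1}$.

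\emph{Step 3: conclude by induction.} If $\cM_{n+1}=\cM_n$ for all $n\geqslant n_0$, the median sequence is constant and we are done. Otherwise let $N\geqslant n_0$ be least with $\cM_{N+1}\neq\cM_N$; by minimality $\cM_{n+1}=\cM_n$ for $n_0\leqslant n<N$, and, assuming without loss of generality $\cM_{N+1}>\cM_N$ (the opposite case being symmetric), an induction on $n\geqslant N$ using the first and third implications of Step 2 shows $\cM_{n+1}\geqslant\cM_n$ for all such $n$. Hence $\left(\cM_n\right)_{n=n_0}^\infty$ is non-decreasing. I expect the only place needing genuine care to be the combinatorial lemma of Step 1, with its even/odd parity split; once that is in hand, Steps 2 and 3 follow immediately from \eqref{eq:affcombmedians}, exactly as for the original \mmm.
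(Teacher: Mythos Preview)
Your proposal is correct and follows exactly the approach the paper indicates: the paper omits the proof, noting that it proceeds ``exactly as in the original \mmm\ \cite[Theorem 2.1]{ChamberlandMartelli}'' via the identity \eqref{eq:affcombmedians}, and your Steps~1--3 constitute precisely that argument. Nothing needs to be added.
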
\bigskip

Loosely speaking, an Akiyama \mmm\ orbit reaches stabilisation in a similar way as an original \mmm\ orbit: the orbit first generates a repeated point which guarantees the convergence of the median sequence\footnote{In the case of $x_1,\ldots,x_{n_0}\in\mathbb{Q}$, since the effective exponent is constant, convergence implies stabilisation.} \cite[Theorem 2.4]{ChamberlandMartelli}. Once one of these repeated points is reached by the median sequence, two equal consecutive medians are created; as apparent from \eqref{eq:affcombmedians}, this causes stabilisation. Formally, we have:\bigskip

\begin{proposition}\label{prop:prop1}\textcolor{white}{a}
\begin{enumerate}
\item[\textnormal{(i)}] If $n\geqslant n_0$ is such that $\cM_n=\cM_{n+1}$, then $x_j=\cM_{n+1}$ for every $j\geqslant n+2$.
\item[\textnormal{(ii)}] The non-decreasing (non-increasing) median sequence converges if there exist $i,j,s\in\mathbb{N}$ with $i\neq j$ and $s\geqslant n_0$ such that $\cM_s\leqslant x_i=x_j$ ($\cM_s\geqslant x_i=x_j$).
\end{enumerate}
\end{proposition}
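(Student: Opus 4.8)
The plan is to transport to the Akiyama \mmm\ the reasoning Chamberland and Martelli give for the original map \cite[Theorems 2.1 and 2.4]{ChamberlandMartelli}; both parts rest on the affine-combination identity \eqref{eq:affcombmedians}, together with the running-sum identity $\cS_{n+1}=n\cM_n$ it encodes.

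For \textbf{(i)}, suppose $\cM_n=\cM_{n+1}=:c$ with $n\geqslant n_0$. Then \eqref{eq:affcombmedians} gives $x_{n+2}=(n+1)\cM_{n+1}-n\cM_n=c$, so the point just appended equals the common median value. I would then isolate the elementary fact that appending to a finite non-empty multiset one further copy of its own median leaves the median unchanged: this is a short case analysis on the parity of the cardinality, using that the extra copy of $c$ lands inside, or immediately adjacent to, the (non-empty) block of elements already equal to $c$, a block which straddles the middle position(s). Applied to $\xi_{n+1}$ this yields $\cM_{n+2}=c=\cM_{n+1}$, so the hypothesis reproduces itself one step later; a routine induction then gives $\cM_j=c$ for all $j\geqslant n$ and hence, again by \eqref{eq:affcombmedians}, $x_j=c$ for all $j\geqslant n+2$.

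For \textbf{(ii)}, I would first reduce to the non-decreasing case: if $\left(\cM_n\right)$ is non-increasing, apply the non-decreasing case to the orbit of $-\xi_{n_0}$, which by Proposition \ref{prop:commute} (in particular \eqref{eq:scalarmultiplication}) is the elementwise negation of the original orbit, with points $-x_n$ and median sequence $\left(-\cM_n\right)$, now non-decreasing; the hypothesis $\cM_s\geqslant x_i=x_j$ becomes $-\cM_s\leqslant -x_i=-x_j$. So assume $\left(\cM_n\right)$ is non-decreasing, set $v:=x_i=x_j$, and suppose $\cM_s\leqslant v$. By Proposition \ref{prop:monotonicmedian} the set $\left\{n\geqslant n_0:\cM_n\leqslant v\right\}$ is an initial segment of $\left\{n_0,n_0+1,\ldots\right\}$. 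If it is the whole set, then $\left(\cM_n\right)$ is non-decreasing and bounded above by $v$, hence converges. Otherwise let $r$ be the first index with $\cM_r>v$, and observe that from some index onwards the state $\xi_n$ contains two copies of $v$, since under $\bMA$ multiplicities never decrease. The remaining task is the heart of \cite[Theorem 2.4]{ChamberlandMartelli}: using \eqref{eq:affcombmedians} and the running-sum identity to track the positions of these two copies of $v$ in the sorted state relative to the median, one shows that when the median would first move strictly past $v$ it is in fact forced to coincide with the previous median, i.e.\ $\cM_{r'}=\cM_{r'-1}$ for some $r'\geqslant r$; by part (i) the orbit then stabilises, so the median sequence converges in this case as well.

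The only genuinely delicate point is this last step of (ii): quantifying how far the median can move when a newly appended point overshoots $v$, and verifying that the two copies of $v$, together with the points subsequently deposited between them and the median, pin it after finitely many steps. This being carried out verbatim as for the original \mmm, I would omit the details.
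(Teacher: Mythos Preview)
Your proposal is correct and matches the paper's approach exactly: the paper explicitly omits the proof of this proposition, stating that it is the same as for the original \mmm\ (citing \cite[Theorems 2.1 and 2.4]{ChamberlandMartelli}), and your sketch is precisely a transport of that argument via \eqref{eq:affcombmedians}. If anything, you have written out more than the paper does, including the reduction of the non-increasing case to the non-decreasing one via \eqref{eq:scalarmultiplication}.
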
\bigskip

The orbits of a singleton set $[x]$, a two-element set containing a zero $[0,x]$, and a set of two equal elements $[x,x]$, where $x\in\mathbb{R}$, are straightforward to compute; these are $(x,0,0,-x,\overline{0})$, $(0,x,0,-x,\overline{0})$, and $(x,x,0,\overline{x})$, respectively. The smallest non-trivial initial sets are those of the form $[x,y]$, where $x$, $y$ are non-zero and $x<y$. By \eqref{eq:scalarmultiplication}, these are represented by sets of the form $[x,1]$, $x<1$, whose limit $\mA(x)$ and transit time $\tauA(x)$ are plotted in Figure \ref{fig:mtauAkiyamaBounds}. For these sets the median sequence is non-increasing. It is straightforward to show that $\bMA([x,1])=[0,x,1]$; in this sense the smallest non-trivial initial sets of the original and Akiyama \mmm s have the same form.

\section{Main result}\label{sec:Main result}

We are now ready to present our main result. For $x\in(0,1)$, we show that the orbit of the smallest non-trivial initial set $[x,1]$ begins with a \textit{predictable phase}: an initial segment of length $2\ell+2$, where $\ell:=\left\lceil\frac{1}{x}\right\rceil\geqslant 2$, in which every term has an explicit formula. In this phase, the first four terms are given by $\left(x_n\right)_{n=1}^4=(x,1,0,2x-1)$, as easily verified, and the rest by the following lemma. Moreover, the phase is followed by stabilisation ---hence the available formulae describe the entire orbit--- if and only if $x$ is a unit fraction, i.e., the reciprocal of $\ell$. See Figure \ref{fig:regphase}.\bigskip

\begin{lemma}\label{lemma:regphase}
Let $x_n$ be the $n$-th term of the orbit of the set $[x,1]$, where $x\in(0,1)$.
\begin{enumerate}
\item[\textnormal{(i)}] If $x=\frac{1}{\ell}$ for some integer $\ell\geqslant 2$, then $x_n=-(n-4)x$ for every $n\in\{5,\ldots,2\ell+2\}$, and $x_n=2x-1$ for every $n\geqslant 2\ell+3$. Thus, $\mA(x)=2x-1$ and $\tauA(x)=2\ell+3$.
\item[\textnormal{(ii)}] If $x\in\left(\frac{1}{\ell},\frac{1}{\ell-1}\right)$ for some integer $\ell\geqslant 2$, then $x_n=-(n-4)x$ for every $n\in\{5,\ldots,2\ell\}$,
\begin{equation}\label{eq:regphaselemma}
x_{2\ell+1}=\left(\ell^2-2\ell+3\right)x-\ell,\quad\text{and}\quad x_{2\ell+2}=\left(\ell^2-\ell+2\right)x-\ell-1.
\end{equation}
Moreover, $\mA(x)<2x-1$ and $\tauA(x)>2\ell+3$.
\end{enumerate}
\end{lemma}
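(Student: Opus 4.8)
The plan is to track the orbit $\left(\xi_n\right)$ of $[x,1]$ explicitly through the predictable phase by induction on $n$, computing at each step the median $\cM_n$ and the new point $x_{n+1}=n\cM_n-\cS_n$ via the defining recursion \eqref{eq:akiyamaMMM}. First I would record the base data: $\left(x_n\right)_{n=1}^4=(x,1,0,2x-1)$, so that $\xi_4=[2x-1,0,x,1]$ (sorted, using $0<x<1$ and $2x-1<x$, with $2x-1<0$ since we are in the regime $x<1$; more precisely $2x-1<0\iff x<\tfrac12$, and for $x\in[\tfrac12,1)$ one checks $\ell=\lceil 1/x\rceil=2$ so the phase is very short and can be handled directly). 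The key structural claim, to be proved inductively, is that for $n$ in the range $\{4,\ldots,2\ell\}$ the set $\xi_n$ consists of the ``descending tail'' $[-(n-4)x,\,-(n-5)x,\,\ldots,\,-x,\,0,\,x,\,1]$ together with the extra point $2x-1$ — i.e.\ the points are $1$, then $x$ and a run of nonpositive multiples of $x$ down to $-(n-4)x$, plus the single point $2x-1$ which sits somewhere in the lower-middle. The median of such a configuration is easy to locate because the set is, apart from the one point $2x-1$, an arithmetic-progression-like cluster; I would show the median equals $-(n-3)x$ (respectively the average of two consecutive tail points) and that then $n\cM_n-\cS_n=-(n-3)x$, continuing the pattern $x_{n+1}=-((n+1)-4)x$. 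The sum $\cS_n$ telescopes: $\cS_n=(2x-1)+1+x+\sum_{k=1}^{n-4}(-kx)=2x-\binom{n-3}{2}x$, and the median computation is a short case check on parity of $|\xi_n|=n$.

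Next I would identify precisely where the pattern breaks. The descending tail $-(n-4)x$ is decreasing in $n$; meanwhile $2x-1$ is fixed. The tail point $-(n-4)x$ first drops below $2x-1$ exactly when $(n-4)x>1-2x$, i.e.\ $n>2+1/x$, which (by $\ell=\lceil 1/x\rceil$) happens first around $n=2\ell+1$ in the non-unit-fraction case and exactly at the boundary in the unit-fraction case. So the induction above is valid for $n\le 2\ell$, giving $x_n=-(n-4)x$ for $n\in\{5,\ldots,2\ell\}$ in both cases (and for $n\in\{5,\ldots,2\ell+2\}$ once one checks, in case (i), that equality $x=1/\ell$ makes $-(2\ell-2)x=2-2\ell x/\,\ldots$ — I would verify $-(n-4)x$ with $x=1/\ell$ gives exactly $2x-1$ at $n=2\ell+2$, namely $-(2\ell-2)/\ell=2/\ell-2=2x-2$; hmm, this needs care). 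Let me restate: in case (i), $x=1/\ell$, I expect the orbit to reach $2x-1$ and then freeze, and the cleanest route is Proposition~\ref{prop:prop1}(i) — show two consecutive medians become equal. So after establishing $x_n=-(n-4)x$ up to $n=2\ell+2$, I would compute $\cM_{2\ell+2}$ and $\cM_{2\ell+3}$ directly and check they coincide (both equal $2x-1$), whence $x_j=2x-1$ for all $j\ge 2\ell+3$ and $\tauA(x)=2\ell+3$, $\mA(x)=2x-1$.

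For case (ii), $x\in\left(\tfrac1\ell,\tfrac1{\ell-1}\right)$, the point $2x-1$ now lies strictly between two tail points near the end of the run, altering the median at step $n=2\ell$; I would carry out the two extra recursion steps explicitly using $\cS_{2\ell}$ and $\cS_{2\ell+1}$ (telescoped as above) and the re-located medians, arriving at the stated formulas \eqref{eq:regphaselemma} for $x_{2\ell+1}$ and $x_{2\ell+2}$. Then for the strict inequalities: I would verify $x_{2\ell+1}\ne x_{2\ell+2}$ and, more to the point, exhibit that the orbit has \emph{not} yet stabilised at time $2\ell+2$ — e.g.\ the medians $\cM_{2\ell+1},\cM_{2\ell+2}$ are distinct (monotone strictly) — so $\tauA(x)>2\ell+3$; and since the median sequence is non-increasing and has at least one more strictly decreasing step after $2x-1$ would have been the value, $\mA(x)\le\cM_{2\ell+2}<2x-1$ (using Proposition~\ref{prop:monotonicmedian} and checking $\cM_{2\ell+2}<2x-1$ as an algebraic inequality in $x$ on the open interval). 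The main obstacle I anticipate is the bookkeeping for the median when $2x-1$ interleaves the descending tail — getting the parity cases and the exact insertion position right at $n=2\ell-1,2\ell,2\ell+1$ — and confirming that the single point $2x-1$ never disturbs the median \emph{before} $n=2\ell$, which is precisely the content of $\ell=\lceil 1/x\rceil$ and will need a careful inequality argument.
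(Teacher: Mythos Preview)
Your overall strategy matches the paper's --- strong induction on $n$ to establish $x_n=-(n-4)x$ through the predictable phase, then explicit computation of the final two steps and an appeal to Proposition~\ref{prop:prop1}(i) for stabilisation in case~(i). But there is a genuine gap in your identification of ``where the pattern breaks''. You compute that the newest tail point $-(n-4)x$ first drops below $2x-1$ when $n>2+1/x$, and then assert this threshold is ``around $n=2\ell+1$''. That is wrong: since $1/x\in(\ell-1,\ell]$, the quantity $2+1/x$ lies in $(\ell+1,\ell+2]$, so the tail already sits at or below $x_4=2x-1$ from $n=\ell+2$ onward --- roughly halfway through the phase, not at its end. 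Your induction as written would therefore only carry you to $n\approx\ell+1$. The remaining steps up to $n=2\ell$ need a separate argument: although $x_4$ is now interleaved among the tail points, it is not yet one of the two \emph{middle} elements, so the median is still determined by tail points alone. The paper handles precisely this by splitting the inductive step into two cases ($r\le\ell+1$ versus $\ell+2\le r\le 2\ell-1$) and verifying in each that $\cM_{r-1}=-\tfrac{r-6}{2}x$ and $\cM_r=-\tfrac{r-5}{2}x$; the formula $x_n=-(n-4)x$ then persists until $x_4$ reaches the middle at $n=2\ell$, which is the real content of $\ell=\lceil 1/x\rceil$.

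Two minor points. First, the paper uses the two-median recursion $x_{r+1}=r\cM_r-(r-1)\cM_{r-1}$ from \eqref{eq:affcombmedians} throughout rather than tracking $\cS_n$; this is cleaner and avoids your slip $\cS_n=2x-\binom{n-3}{2}x$ (the constant is $3x$, since $x_1+x_2+x_3+x_4=3x$). Second, your stated median value ``$-(n-3)x$'' is actually the value of the new point $x_{n+1}$, not of $\cM_n$ itself.
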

\begin{proof}
Let $x\in\left[\frac{1}{\ell},\frac{1}{\ell-1}\right)$ for some integer $\ell\geqslant 2$. First, suppose $\ell=2$. Then $x\in\left[\frac{1}{2},1\right)$. If $x=\frac{1}{2}$, then $\left(x_n\right)_{n=1}^\infty=\left(\frac{1}{2},1,0,0,-\frac{1}{2},-1,\overline{0}\right)$, satisfying (i). Otherwise, $\left(x_n\right)_{n=1}^6=(x,1,0,2x-1,3x-2,4x-3)$, satisfying (ii).

Therefore, it remains to prove the lemma for $\ell\geqslant 3$. In this case, we have $x\in\left(0,\frac{1}{2}\right)$. We divide the proof into two parts.\bigskip

\noindent\underline{\textsc{Part I}: Formulae for $x_5$, \ldots, $x_{2\ell}$}. Let us prove that for every $n\in\{5,\ldots,2\ell\}$ we have
\begin{equation}\label{eq:regphase0}
x_n=-(n-4)x
\end{equation}
by strong induction on $n$.
First, since $x\in\left(0,\frac{1}{2}\right)$, then $x_4<x_3<x_1<x_2$, so $\cM_4=\left\langle x_3,x_1\right\rangle=\frac{x}{2}$ and
$$x_5=4\cM_4-\cS_4=4\cdot\frac{x}{2}-3x=-x,$$
proving that the statement holds for $n=5$.

Next, let $r\in\{5,\ldots,2\ell-1\}$ be such that $x_n=-(n-4)x$ for every $n\in\{5,\ldots,r\}$. We shall prove that $x_{r+1}=-(r-3)x$, dividing the proof into two cases:\bigskip

\noindent\textsc{Case I}: $r\in\{5,\ldots,\ell+1\}$. Since $x<\frac{1}{\ell-1}$, then
$$x_4-x_{r}=(2x-1)+(r-4)x\leqslant (2x-1)+[(\ell+1)-4]x<0,\qquad\text{i.e.,}\qquad x_4<x_r,$$
so
$$x_4<x_r<x_{r-1}<\cdots<x_5<x_3<x_1<x_2,$$
from which we can see that, if $r$ is odd,
$$\cM_{r-1}=\begin{cases}
\left\langle x_{r-2},x_{r-4}\right\rangle,&\text{if }r\in\{5,7\};\\
\left\langle x_{\frac{r+3}{2}}, x_{\frac{r+1}{2}}\right\rangle,&\text{if }r\geqslant 9
\end{cases}\qquad\text{and}\qquad\cM_{r}=\begin{cases}
x_3,&\text{if }r=5;\\
x_{\frac{r+3}{2}},&\text{if }r\geqslant 7,
\end{cases}$$
otherwise
$$\cM_{r-1}=\begin{cases}
x_3,&\text{if }r=6;\\
x_{\frac{r+2}{2}},&\text{if }r\geqslant 8
\end{cases}\qquad\text{and}\qquad\cM_{r}=\begin{cases}
\left\langle x_{3},x_{5}\right\rangle,&\text{if }r=6;\\
\left\langle x_{\frac{r+4}{2}}, x_{\frac{r+2}{2}}\right\rangle,&\text{if }r\geqslant 8.
\end{cases}\medskip$$

\noindent\textsc{Case II}: $r\in\{\ell+2,\ldots,2\ell-1\}$. Since $\frac{1}{\ell}\leqslant x<\frac{1}{\ell-1}$, then
$$x_4-x_{\ell+1}=(2x-1)+[(\ell+1)-4]x<0,\qquad\text{i.e.,}\qquad x_4<x_{\ell+1}$$
and
$$x_{\ell+2}-x_4=[(\ell+2)-4]x-(2x-1)\leqslant 0,\qquad\text{i.e.,}\qquad x_{\ell+2}\leqslant x_4,$$
so
$$x_{r}<\cdots<x_{\ell+2}\leqslant x_4<x_{\ell+1}<x_{\ell}<\cdots<x_5<x_3<x_1<x_2,$$
from which we can see that
$$\cM_{r-1}=\begin{cases}
\left\langle x_{\frac{r+3}{2}}, x_{\frac{r+1}{2}}\right\rangle,&\text{if }r\text{ is odd};\\
x_{\frac{r+2}{2}},&\text{otherwise}
\end{cases}\qquad\text{and}\qquad\cM_{r}=\begin{cases}
x_{\frac{r+3}{2}},&\text{if }r\text{ is odd};\\
\left\langle x_{\frac{r+4}{2}}, x_{\frac{r+2}{2}}\right\rangle,&\text{otherwise}.
\end{cases}$$\bigskip

In both cases we have $\cM_{r-1}=-\frac{r-6}{2}x$ and $\cM_{r}=-\frac{r-5}{2}x$, so
$$x_{r+1}=r\cM_{r}-(r-1)\cM_{r-1}=r\left(-\frac{r-5}{2}x\right)-(r-1)\left(-\frac{r-6}{2}x\right)=-(r-3)x,$$
as desired.\bigskip

\noindent\underline{\textsc{Part II}: Formulae for $x_{2\ell+1}$ and $x_{2\ell+2}$}. From the previous part we know that
$\cM_{2\ell-1}=x_{\ell+1}$. Moreover, since
$$x_{2\ell}<\cdots<x_{\ell+2}\leqslant x_4<x_{\ell+1}<x_{\ell}<\cdots<x_5<x_3<x_1<x_2,$$
then $\cM_{2\ell}=\left\langle x_4, x_{\ell+1}\right\rangle$. Therefore,
\begin{equation}\label{eq:regphase1}
x_{2\ell+1}=2\ell\cM_{2\ell}-(2\ell-1)\cM_{2\ell-1}=\ell x_4-(\ell-1)x_{\ell+1}=x_4-(\ell-1)\left(x_{\ell+1}-x_4\right)<x_4,
\end{equation}
so that $\cM_{2\ell+1}=x_4$, implying
\begin{equation}\label{eq:regphase2}
x_{2\ell+2}=(2\ell+1)\cM_{2\ell+1}-2\ell\cM_{2\ell}=(\ell+1)x_4-\ell x_{\ell+1}.
\end{equation}
Next, we split into two cases:\bigskip

\noindent\textsc{Case I}: $x=\frac{1}{\ell}$. In this case, $x_4=x_{\ell+2}=-(\ell-2)x$. Substituting this and $x_{\ell+1}=-(\ell-3)x$ into \eqref{eq:regphase1} and \eqref{eq:regphase2} gives $x_{2\ell+1}=-(2\ell-3)x$ and $x_{2\ell+2}=-(2\ell-2)x$, extending the formula \eqref{eq:regphase0}. Moreover, since $x_{2\ell+2}<x_{2\ell+1}<x_4$, then $\cM_{2\ell+2}=\left\langle x_{\ell+2},x_4\right\rangle=x_4=\cM_{2\ell+1}$, so, by part (ii) of Proposition \ref{prop:prop1}, we have $x_n=x_4=2x-1$ for every $n\geqslant 2\ell+3$. This means $\mA(x)=2x-1$ and $\tauA(x)=2\ell+3$, completing the proof.\medskip

\noindent\textsc{Case II}: $x\in\left(\frac{1}{\ell},\frac{1}{\ell-1}\right)$. Substituting $x_4=2x-1$ and $x_{\ell+1}=-(\ell-3)x$ into \eqref{eq:regphase1} and \eqref{eq:regphase2} gives \eqref{eq:regphaselemma}. Moreover, we have
$$x_{2\ell+2}=\left(\ell^2-\ell+2\right)x-\ell-1=2x-1+\ell(\ell-1)x-\ell<2x-1=x_4,$$
because $\ell(\ell-1)x-\ell<0$ as $x<\frac{1}{\ell-1}$. Consequently, $\cM_{2\ell+2}<\mathcal{M}_{2\ell+1}$, so $\mA(x)<\mathcal{M}_{2\ell+1}=2x-1$ and $\tauA(x)>2\ell+3$, completing the proof.
\end{proof}\bigskip

\begin{figure}
\centering
\begin{tabular}{ccc}
\begin{tikzpicture}
\begin{axis}[
	xmin=0,
	xmax=26.66666667,
	ymin=-2.099800000,
	ymax=1.281800000,
    xtick={1,5,25},
    ytick={-.8182,0.9091e-1,1},
    yticklabels={$-\frac{9}{11}$,$\frac{1}{11}$,$1$},
	axis lines=middle,
    x axis line style=->,
    y axis line style=<->,
	samples=100,
	xlabel=$n$,
	ylabel=$x_n$,
	width=8cm,
    height=6cm
]
\draw[dashed] (axis cs:0,1) -- (axis cs:2,1);
\draw[dashed] (axis cs:1,0) -- (axis cs:1,0.9091e-1);
\draw[dashed] (axis cs:0,0.9091e-1) -- (axis cs:1,0.9091e-1);
\draw[dashed] (axis cs:25,0) -- (axis cs:25,-.8182);
\draw[dashed] (axis cs:25, -.8182) -- (axis cs:0,-.8182);
\draw[dashed] (axis cs:5,0) -- (axis cs:5, -0.9091e-1);

\draw[thick,color=newblue] plot coordinates {(axis cs:1, 0.9091e-1) (axis cs:2, 1.) (axis cs:3, 0.) (axis cs:4, -.8182) (axis cs:5, -0.9091e-1) };

\draw[thick,color=newlightblue] plot coordinates {(axis cs:5, -0.9091e-1) (axis cs:6, -.1818) (axis cs:7, -.2727) (axis cs:8, -.3636) (axis cs:9, -.4545) (axis cs:10, -.5455) (axis cs:11, -.6364) (axis cs:12, -.7273) (axis cs:13, -.8182) (axis cs:14, -.9091) (axis cs:15, -1.) (axis cs:16, -1.091) (axis cs:17, -1.182) (axis cs:18, -1.273) (axis cs:19, -1.364) (axis cs:20, -1.455) (axis cs:21, -1.545) (axis cs:22, -1.636) (axis cs:23, -1.727) (axis cs:24, -1.818) (axis cs:25, -.8182)};

\draw[thick,color=newgreen] plot coordinates {(axis cs:24, -1.818) (axis cs:25, -.8182)};

\fill[newblue] (axis cs:1, 0.9091e-1) circle(2pt);
\fill[newblue] (axis cs:2, 1.) circle(2pt);
\fill[newblue] (axis cs:3, 0.) circle(2pt);
\fill[newblue] (axis cs:4, -.8182) circle(2pt);
\fill[newlightblue] (axis cs:5, -0.9091e-1) circle(2pt);
\fill[newlightblue] (axis cs:6, -.1818) circle(2pt);
\fill[newlightblue] (axis cs:7, -.2727) circle(2pt);
\fill[newlightblue] (axis cs:8, -.3636) circle(2pt);
\fill[newlightblue] (axis cs:9, -.4545) circle(2pt);
\fill[newlightblue] (axis cs:10, -.5455) circle(2pt);
\fill[newlightblue] (axis cs:11, -.6364) circle(2pt);
\fill[newlightblue] (axis cs:12, -.7273) circle(2pt);
\fill[newlightblue] (axis cs:13, -.8182) circle(2pt);
\fill[newlightblue] (axis cs:14, -.9091) circle(2pt);
\fill[newlightblue] (axis cs:15, -1.) circle(2pt);
\fill[newlightblue] (axis cs:16, -1.091) circle(2pt);
\fill[newlightblue] (axis cs:17, -1.182) circle(2pt);
\fill[newlightblue] (axis cs:18, -1.273) circle(2pt);
\fill[newlightblue] (axis cs:19, -1.364) circle(2pt);
\fill[newlightblue] (axis cs:20, -1.455) circle(2pt);
\fill[newlightblue] (axis cs:21, -1.545) circle(2pt);
\fill[newlightblue] (axis cs:22, -1.636) circle(2pt);
\fill[newlightblue] (axis cs:23, -1.727) circle(2pt);
\fill[newlightblue] (axis cs:24, -1.818) circle(2pt);
\fill[newgreen] (axis cs:25, -.8182) circle(2pt);
\end{axis}
\end{tikzpicture}&\phantom{a}&\begin{tikzpicture}
\begin{axis}[
	xmin=0,
	xmax=41.60000000,
	ymin=-3.138200000,
	ymax=1.376200000,
    xtick={1,5,22,39},
    ytick={-1.714,-1.333,0.9524e-1,1},
    yticklabels={$-\frac{12}{7}$,$-\frac{4}{3}$,$\frac{2}{21}$,$1$},
	axis lines=middle,
    x axis line style=->,
    y axis line style=<->,
	samples=100,
	xlabel=$n$,
	ylabel=$x_n$,
	width=8cm,
    height=6cm
]
\draw[dashed] (axis cs:0,1) -- (axis cs:2,1);
\draw[dashed] (axis cs:1,0) -- (axis cs:1,0.9524e-1);
\draw[dashed] (axis cs:0,0.9524e-1) -- (axis cs:1,0.9524e-1);
\draw[dashed] (axis cs:22,0) -- (axis cs:22, -1.714);
\draw[dashed] (axis cs:0, -1.714) -- (axis cs:22, -1.714);
\draw[dashed] (axis cs:0,-1.333) -- (axis cs:39, -1.333);
\draw[dashed] (axis cs:39, 0) -- (axis cs:39, -1.333);
\draw[dashed] (axis cs:5,0) -- (axis cs:5, -0.9524e-1);

\draw[thick,color=newblue] plot coordinates {(axis cs:1, 0.9524e-1) (axis cs:2, 1.) (axis cs:3, 0.) (axis cs:4, -.8095) (axis cs:5, -0.9524e-1)};

\draw[thick,color=newlightblue] plot coordinates {(axis cs:5, -0.9524e-1) (axis cs:6, -.1905) (axis cs:7, -.2857) (axis cs:8, -.3810) (axis cs:9, -.4762) (axis cs:10, -.5714) (axis cs:11, -.6667) (axis cs:12, -.7619) (axis cs:13, -.8571) (axis cs:14, -.9524) (axis cs:15, -1.048) (axis cs:16, -1.143) (axis cs:17, -1.238) (axis cs:18, -1.333) (axis cs:19, -1.429) (axis cs:20, -1.524) (axis cs:21, -1.619) (axis cs:22, -1.714)};

\draw[thick,color=newpurple] plot coordinates {(axis cs:22, -1.714) (axis cs:23, -1.286) (axis cs:24, -1.333) (axis cs:25, -1.381) (axis cs:26, -1.429) (axis cs:27, -2.095) (axis cs:28, -2.190) (axis cs:29, -2.286) (axis cs:30, -2.381) (axis cs:31, -2.476) (axis cs:32, -2.571) (axis cs:33, -2.667) (axis cs:34, -2.762) (axis cs:35, -2.048) (axis cs:36, -2.095) (axis cs:37, -2.143) (axis cs:38, -2.190)};

\draw[thick,color=newgreen] plot coordinates {(axis cs:38, -2.190) (axis cs:39, -1.333)};

\fill[newblue] (axis cs:1, 0.9524e-1) circle(2pt);
\fill[newblue] (axis cs:2, 1.) circle(2pt);
\fill[newblue] (axis cs:3, 0.) circle(2pt);
\fill[newblue] (axis cs:4, -.8095) circle(2pt);
\fill[newlightblue] (axis cs:5, -0.9524e-1) circle(2pt);
\fill[newlightblue] (axis cs:6, -.1905) circle(2pt);
\fill[newlightblue] (axis cs:7, -.2857) circle(2pt);
\fill[newlightblue] (axis cs:8, -.3810) circle(2pt);
\fill[newlightblue] (axis cs:9, -.4762) circle(2pt);
\fill[newlightblue] (axis cs:10, -.5714) circle(2pt);
\fill[newlightblue] (axis cs:11, -.6667) circle(2pt);
\fill[newlightblue] (axis cs:12, -.7619) circle(2pt);
\fill[newlightblue] (axis cs:13, -.8571) circle(2pt);
\fill[newlightblue] (axis cs:14, -.9524) circle(2pt);
\fill[newlightblue] (axis cs:15, -1.048) circle(2pt);
\fill[newlightblue] (axis cs:16, -1.143) circle(2pt);
\fill[newlightblue] (axis cs:17, -1.238) circle(2pt);
\fill[newlightblue] (axis cs:18, -1.333) circle(2pt);
\fill[newlightblue] (axis cs:19, -1.429) circle(2pt);
\fill[newlightblue] (axis cs:20, -1.524) circle(2pt);
\fill[newlightblue] (axis cs:21, -1.619) circle(2pt);
\fill[newlightblue] (axis cs:22, -1.714) circle(2pt);
\fill[newpurple] (axis cs:23, -1.286) circle(2pt);
\fill[newpurple] (axis cs:24, -1.333) circle(2pt);
\fill[newpurple] (axis cs:25, -1.381) circle(2pt);
\fill[newpurple] (axis cs:26, -1.429) circle(2pt);
\fill[newpurple] (axis cs:27, -2.095) circle(2pt);
\fill[newpurple] (axis cs:28, -2.190) circle(2pt);
\fill[newpurple] (axis cs:29, -2.286) circle(2pt);
\fill[newpurple] (axis cs:30, -2.381) circle(2pt);
\fill[newpurple] (axis cs:31, -2.476) circle(2pt);
\fill[newpurple] (axis cs:32, -2.571) circle(2pt);
\fill[newpurple] (axis cs:33, -2.667) circle(2pt);
\fill[newpurple] (axis cs:34, -2.762) circle(2pt);
\fill[newpurple] (axis cs:35, -2.048) circle(2pt);
\fill[newpurple] (axis cs:36, -2.095) circle(2pt);
\fill[newpurple] (axis cs:37, -2.143) circle(2pt);
\fill[newpurple] (axis cs:38, -2.190) circle(2pt);
\fill[newgreen] (axis cs:39, -1.333) circle(2pt);
\end{axis}
\end{tikzpicture}
\end{tabular}
\caption{\label{fig:regphase}\small
The orbit of $[x,1]$ for $x=\frac{1}{11}$ (left) and for $x=\frac{2}{21}\in\left(\frac{1}{11},\frac{1}{10}\right)$ (right). The first four terms are shown in dark blue, the terms prescribed by Lemma \ref{lemma:regphase} in light blue, the unprescribed terms in purple, and the term from which the orbit stabilises in green.
}
\end{figure}
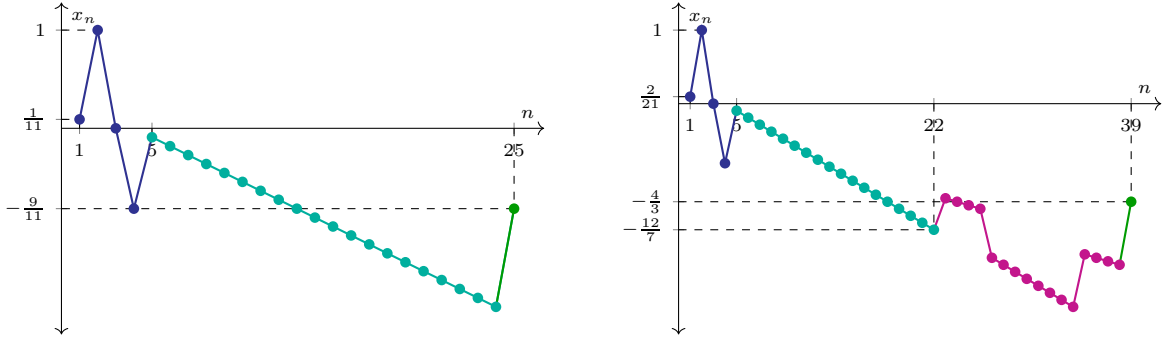

To show how our main theorem follows from Lemma \ref{lemma:regphase}, let $x\in(0,1)$. If $x=\frac{1}{\ell}$ for some integer $\ell\geqslant 2$, then, by Lemma \ref{lemma:regphase}, we have $\mA(x)=2x-1$ and $\tauA(x)=2\ell+3=\frac{2}{x}+3$. Otherwise, $x\in\left(\frac{1}{\ell},\frac{1}{\ell-1}\right)$ for some integer $\ell\geqslant 2$, so by Lemma \ref{lemma:regphase}, $\mA(x)<2x-1$ and $\tauA(x)\geqslant2\ell+3=\frac{2}{\frac{1}{\ell}}+3>\frac{2}{x}+3$.

\section{Remarks on symmetries}\label{sec:Symmetries}

One of the most striking features of Figure \ref{fig:mtauAkiyamaBounds} is the presence of symmetries, particularly around $x=\frac{1}{2}$. In this closing section, we briefly explain the symmetry near $x=\frac{1}{2}$ in the light of what has been done for the original \mmm\ \cite{HoseanaVivaldi1}.

As in \cite{HoseanaVivaldi1}, we now regard $[x,1]$, $x\in(0,1)$, as a set of univariate piecewise-affine continuous real functions [in this case $Y_1(x)=x$ and $Y_2(x)=1$]; we refer to such a set as a \textit{bundle} \cite[Section 2.2]{HoseanaVivaldi1}. Observing that
$$\bMA([x,1])=[x,1,0]\quad\qquad\text{and}\qquad\quad\bMA([x,1,0])=[x,1,0,2x-1],$$ it is natural to regard $\bMA$ as a self-map on the space of non-empty bundles with pointwise action.

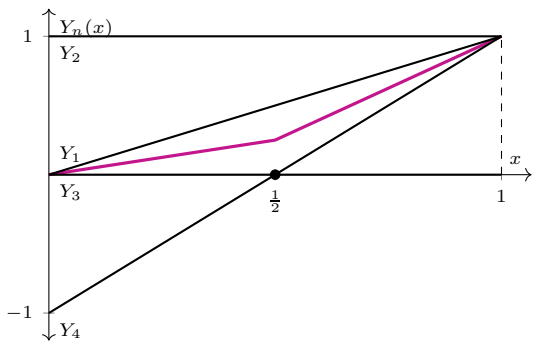
\begin{figure}
\centering
\begin{tikzpicture}
\begin{axis}[
	xmin=0,
	xmax=1.066666667,
	ymin=-1.200000000,
	ymax=1.200000000, 
    xtick={0.5,1},
    ytick={-1,1},
    xticklabels={$\frac{1}{2}$,$1$},
	axis lines=middle,
	samples=100,
	xlabel=$x$,
	ylabel=$Y_n(x)$,
	width=8cm,
	height=6cm,
	clip=false,
    x axis line style=->,
    y axis line style=<->,
]
\draw[newpurple,very thick] (axis cs:0,0) -- (axis cs:0.5,0.25) -- (axis cs:1,1);

\addplot[thick,domain=0:1] {0};
\addplot[thick,domain=0:1] {x};
\addplot[thick,domain=0:1] {1};
\addplot[thick,domain=0:1] {2*x-1};

\fill[black] (axis cs:0.5,0) circle (2pt);

\draw[dashed] (axis cs:1,0) -- (axis cs:1,1);

\node[above right,yshift=1pt] at (axis cs:0,0) {$Y_1$};
\node[below right] at (axis cs:0,1) {$Y_2$};
\node[below right] at (axis cs:0,0) {$Y_3$};
\node[below right] at (axis cs:0,-1) {$Y_4$};
\end{axis}
\end{tikzpicture}
\caption{\label{fig:symmetry}\small
The bundle $[x,1,0,2x-1]$ and its median $\cM_4$ in purple.
}
\end{figure}

The point $\frac{1}{2}$ is an \textit{X-point} \cite[Section 2.2]{HoseanaVivaldi1}: a transversal intersection of two bundle functions, namely $Y_3(x)=0$ and $Y_4(x)=2x-1$ (see Figure \ref{fig:symmetry}). Let $$\Omega:=\left[Y_3,Y_4,Y_1\right]$$ be the subbundle containing these two functions and the function $Y_1$ immediately above the X-point. Notice that, for
\begin{equation}\label{eq:muf}
f(z)=z-2x+1\qquad\qquad\text{and}\qquad\qquad \mu(x)=1-x,
\end{equation}
the subbundle $\Omega$ satisfies
$$\Omega(\mu(x))=[\mu(x),2\mu(x)-1,0]=[1-x,-2x+1,0]=f([0,2x-1,x])=f(\Omega(x)).$$
Moreover, it is possible to show that the set of all functions $Y$ satisfying the same identity, $Y(\mu(x))=f(Y(x))$, is precisely
$$\Psi:=\bigl\{\alpha\min\left\{Y_3,Y_4\right\}+\beta\max\left\{Y_3,Y_4\right\}+\gamma Y_1:\alpha+\beta+\gamma=1\bigr\},$$
i.e., the set of all affine combinations of the functions $\min\left\{Y_3,Y_4\right\}$, $\max\left\{Y_3,Y_4\right\}$, and $Y_1$, the minimum and maximum being defined pointwise \cite[Lemma 5.1]{HoseanaVivaldi1}.

One shows that
$$Y_5=4\cM_4-3\cM_3=0\cdot\min\left\{Y_3,Y_4\right\}+2\cdot\max\left\{Y_3,Y_4\right\}+(-1)\cdot Y_1\in\Psi.$$
Moreover, for every $n\geqslant 5$, the fact that $Y_5,\ldots,Y_n\in\Psi$ implies $Y_{n+1}\in\Psi$, since
$$Y_{n+1}=n\cM_n-(n-1)\cM_{n-1}$$
is an affine combination of $\cM_n$ and $\cM_{n-1}$, each of which is either a function in the set $\left[Y_5,\ldots,Y_n\right]\uplus\left[\min\left\{Y_3,Y_4\right\},\max\left\{Y_3,Y_4\right\},Y_1\right]$ or the mean of two such functions. This inductively proves that $Y_n\in\Psi$ for every $n\geqslant 5$ (cf.~\cite[Lemma 5.2]{HoseanaVivaldi1}).

In other words, we have
$$Y_n(\mu(x))=f\left(Y_n(x)\right)$$
for every $n\geqslant 5$, where $f$ and $\mu$ are given by \eqref{eq:muf}. Since $\mu:\left(0,\frac{1}{2}\right]\to\left[\frac{1}{2},1\right)$ is a bijection, the transformation $f$ connects the dynamics at every initial condition $x\in\left(0,\frac{1}{2}\right]$ to that at a unique initial condition $\mu(x)\in\left[\frac{1}{2},1\right)$. In particular, for every $x\in\left(0,\frac{1}{2}\right]$, we have
$$\mA(\mu(x))=f\left(\mA(x)\right)\qquad\qquad\text{and}\qquad\qquad\tauA(\mu(x))=\tauA(x),$$
i.e.,
$$\mA(1-x)=\mA(x)-2x+1\qquad\qquad\text{and}\qquad\qquad\tauA(1-x)=\tauA(x),$$
explaining the symmetry seen in Figure \ref{fig:mtauAkiyamaBounds}.

The symmetry also means that the bounds in our main theorem ---although already sufficient to achieve the goal of this paper--- can be improved as
$$\mA(x)\leqslant\begin{cases}
2x-1,&\text{if }x\in\left(0,\frac{1}{2}\right];\\
0,&\text{if }x\in\left(\frac{1}{2},1\right)
\end{cases}\qquad\text{and}\qquad
\tauA(x)\geqslant\begin{cases}
\frac{2}{x}+3,&\text{if }x\in\left(0,\frac{1}{2}\right];\\
\frac{2}{1-x}+3,&\text{if }x\in\left(\frac{1}{2},1\right),
\end{cases}$$
where equalities in $\left(0,\frac{1}{2}\right]$ occur at unit fractions, whereas those in $\left[\frac{1}{2},1\right)$ occur at fractions whose numerator and denominator differ by $1$. These two families of fractions form two sequences, converging to the points $0$ and $1$ where $\mA$ is discontinuous, along which $\tauA$ becomes arbitrarily large.

\section*{Acknowledgments} The author thanks Shigeki Akiyama, who first suggested this variant of the \mmm; Franco Vivaldi, through whom the suggestion was communicated; and MATRIX, the organiser of the conference which made the communication possible \cite{Vivaldi}.



\begin{thebibliography}{99}

\bibitem{Akiyama}
     \newblock S. Akiyama,
     \newblock private communication, 2019.

\bibitem{Blizard}
     \newblock W. D. Blizard,
     \newblock Multiset theory,
     \newblock \emph{Notre Dame Journal of Formal Logic}, \textbf{30} (1989), 36--66. 

\bibitem{CellarosiMunday}
     \newblock F. Cellarosi and S. Munday,
     \newblock On two conjectures for M\&m sequences,
     \newblock \emph{Journal of Difference Equations and Applications}, \textbf{22} (2016), 428--440. 

\bibitem{ChamberlandMartelli}
     \newblock M. Chamberland and M. Martelli,
     \newblock The mean-median map,
     \newblock \emph{Journal of Difference Equations and Applications}, \textbf{13} (2007), 577--583.

\bibitem{HoseanaMSc}
    \newblock J. Hoseana,
    \newblock \emph{The Mean-Median Map},
    \newblock MSc dissertation, Queen Mary University of London, 2015.
 
\bibitem{HoseanaPhD}
    \newblock J. Hoseana,
    \newblock \emph{The Mean-Median Map},
    \newblock PhD thesis, Queen Mary University of London, 2019.

\bibitem{HoseanaVivaldi1}
 \newblock J. Hoseana and F. Vivaldi,
 \newblock Geometrical properties of the mean-median map,
 \newblock \emph{Journal of Computational Dynamics}, \textbf{7} (2020), 83--121.
 
\bibitem{HoseanaVivaldi2}
 \newblock J. Hoseana and F. Vivaldi,
 \newblock On the unboundedness of the transit time of mean-median orbits,
 \newblock \emph{Journal of Difference Equations and Applications}, \textbf{9--10} (2020), 1398--1421.

\bibitem{ShultzShiflett}
 \newblock H. S. Shultz and R. C. Shiflett,
 \newblock M\&m sequences,
 \newblock \emph{College Mathematics Journal}, \textbf{36} (2005), 191--198.

\bibitem{Vivaldi}
    \newblock F.\ Vivaldi,
    \newblock The mean-median map,
    \newblock in \emph{2019--20 MATRIX Annals} (eds.\ D.\ R.\ Wood, J.\ de Gier, C.\ E.\ Praeger, T.\ Tao), Springer, (2021), 725--727.
\end{thebibliography}
\end{document}